\documentclass[12pt,a4paper]{article}

\usepackage{amsmath}
\usepackage{amsfonts}
\usepackage{amssymb}
\usepackage{amsthm}

\usepackage{csquotes}

\theoremstyle{definition}
\newtheorem{theorem}{Theorem}[section]

\newtheorem*{condition*}{Condition}

\newtheorem*{claim*}{Claim}

\newtheorem{claimproof}{Proof of Claim \theclaim}
\newtheorem*{claimproof*}{Proof of Claim \theclaim}

\newtheorem{theoremproof}{Proof of Theorem \thetheorem}
\newtheorem*{theoremproof*}{Proof of Theorem \thetheorem}

\newtheorem*{lemma*}{Lemma}

\newtheorem{remark}[theorem]{Remark}
\newtheorem{definition}[theorem]{Definition}

\newtheorem{example}[theorem]{Example}

\renewenvironment{proof}{\textbf{Proof.}}{\qed}

\renewenvironment{claimproof*}{\textbf{Proof of Claim \theclaim}}{\qed}

\renewenvironment{theoremproof*}{\textbf{Proof of Theorem \thetheorem}}{\qed}

%\usepackage{fontspec}
%\setmainfont{Minion Pro}

\usepackage[dvipsnames]{xcolor}
\usepackage{tikz}
\usepackage{wrapfig}
\usepackage{graphicx}

\usepackage{tocloft}
\usepackage{tabularx}
\usepackage{multirow}
\usepackage{caption}
\usepackage{capt-of}

\usepackage{nicefrac}

\begin{document}

\title{Nowhere differentiable functions with respect to the position} 
\author{K. Kavvadias and K. Makridis}
\maketitle

\begin{abstract}

Let $\Omega$ be a bounded domain in $\mathbb{C}$ such that $\partial \Omega$ does not contain isolated points. Let $R(\Omega)$ be the space of uniform limits on $\overline{\Omega}$ of rational functions with poles off $\overline{\Omega}$, endowed with the supremum norm. We prove that either generically all functions $f$ in $R(\Omega)$ satisfy 
$$ \limsup_{\substack{z \to z_0 \\ z \in \partial \Omega}} \Big| \frac{f(z) - f(z_0)}{z - z_0} \Big| = + \infty $$

for every $z_0 \in \partial \Omega$ or no such function in $R(\Omega)$ meets this requirement. In the first case, the generic function $f \in R(\Omega)$ is nowhere differentiable on $\partial \Omega$ with respect to the position. We give specific examples where each case of the previous dichotomy holds. We also extend the previous result to unbounded domains.

\end{abstract}

AMS clasification numbers: 26A27, 30H50.

\medskip

Keywords and phrases: Weierstrass function, nowhere differentiable functions, rational approximation, set of approximation.

\section{Introduction}

It is well known that the Weierstrass function $u: \mathbb{R} \to \mathbb{R}$ defined by
$$ u(x) = \sum_{n = 0}^{+ \infty} a^n cos(b^nx) $$

for $a \in (0, 1), b$ an odd integer and $ab > 1 + \frac{3 \pi}{2}$ is a $2 \pi$ - periodic function which is continuous and nowhere differentiable. Furthermore, this phenomenon is generic in the spaces $C([0, 1])$, $C(\mathbb{R})$ and $C(\mathbb{R} / 2 \pi)$. In \cite{ESKENAZIS.MAKRIDIS} and \cite{ESKENAZIS} the Weierstrass function was complexified, giving a $2 \pi$ - periodic function of analytic type and the phenomenon was proven to be generic in the disc algebra $A(D)$, where $D = \{ z \in \mathbb{C}: |z| < 1 \}$ is the open unit disc (centered at $0$). We remind that a function $f: \overline{D} \to \mathbb{C}$ belongs to $A(D)$ if and only if $f$ is continuous on $\overline{D}$ and holomorphic in $D$. More precisely, there exist functions $f \in A(D)$ such that the (real) functions $Ref(e^{i \theta})$ and $Imf(e^{i \theta})$ are nowhere differentiable with respect to the real parameter $\theta$. Furthermore, the set of these functions $f \in A(D)$ is residual in $A(D)$, where the space $A(D)$ is endowed with the supremum norm on $\overline{D}$; it contains the set of functions $f \in A(D)$such that 
$$ \limsup_{t \to t_0} \Big| \frac{Ref(e^{i t}) - Ref(e^{i t_0})}{t - t_0} \Big| = \limsup_{t \to t_0} \Big| \frac{Imf(e^{i t}) - Imf(e^{i t_0})}{t - t_0} \Big| = + \infty \;\;\;\;\;\; (*) $$

for all $t_0 \in \mathbb{R}$. The last set is $G_{\delta}$ - dense in $A(D)$. 

In \cite{MASTRANTONIS.PANAGIOTIS} a generalization of the previous result is obtained, where the open unit disc $D$ is replaced by a domain $\Omega$, where $\Omega$ is bounded by a finite set of disjoint Jordan curves. Now, the parametrization of each bounded Jordan curve is induced by a conformal Riemann mapping. It is essential that the denominator in relation $(*)$ is the quantity $t - t_0$, where $t$ is the (real) parameter of the curve $\gamma: t \mapsto \gamma(t)$; thus, the functions obtained in this way are nowhere differentiable with respect to the real parameter $t$.

In the present paper we wish to obtain analogous results, where the functions will be nowhere differentiable with respect to the position $z \in \mathbb{C}$. It suffices to prove that it holds
$$ \limsup_{\substack{z \to z_0 \\ z \in \partial \Omega}} \Big| \frac{f(z) - f(z_0)}{z - z_0} \Big| = + \infty \;\;\;\;\;\; (**) $$

for all (or some) $z_0 \in \partial \Omega$. Certainly, if the boundary of $\Omega$ is a curve with a smooth parametrization with non - vanishing derivative, then relation $(*)$ implies relation $(**)$. This yields that the phenomenon of functions $f \in A(\Omega)$ satisfying relation $(**)$ for every $z_0 \in J$, where $J \subseteq \partial \Omega$ is a compact set without isolated points, is residual in $A(\Omega)$; here $\Omega$ is a disc, a half plane or the complement of a disc or even other domains related to the previous ones. Taking advantage of Baire's Category Theorem we prove, for instance, that this phenomenon is generic when $\Omega$ is a bounded angular sector. Several such examples are given in Section 4 below.

The main result is proven in Section 2 (Theorem \ref{main theorem of 2}). It states that if $\Omega$ is a bounded domain in $\mathbb{C}$, $J \subseteq \partial \Omega$ is a compact set without isolated points and $S(\Omega, J)$ denotes the set of functions $f \in R(\Omega)$, satisfying  
$$ \limsup_{\substack{z \to z_0 \\ z \in J}} \Big| \frac{f(z) - f(z_0)}{z - z_0} \Big| = + \infty \;\;\;\;\;\; (***) $$

for all $z_0 \in J$, then the class $S(\Omega, J)$ is either void or $G_{\delta}$ - dense in $R(\Omega)$. Here, $R(\Omega)$ denotes the space of uniform limits on $\overline{\Omega}$ of rational functions with poles off $\overline{\Omega}$, endowed with the supremum norm on $\overline{\Omega}$. Quite often, it holds $R(\Omega) = A(\Omega)$; see the relevant comments in Section 3.

In order to generalize the previous result to unbounded domains $\Omega$, V. Nestoridis suggested to replace $R(\Omega)$ with the space $\widetilde{R}(\Omega)$. The space $\widetilde{R}(\Omega)$ consists of all functions $f: \overline{\Omega} \to \mathbb{C}$, where the closure $\overline{\Omega}$ is taken in $\mathbb{C}$ and apparently does not contain $\infty$, which are uniform limits on each compact subset of $\overline{\Omega}$ of rational functions with poles off $\overline{\Omega}$. This is a Fr\'{e}chet space, endowed with the seminorms 
$$ \sup_{\substack{z \in \overline{\Omega} \\ |z| \leq m}} |f(z)|, f \in \widetilde{R}(\Omega) \; \text{for} \; m = 1, 2, \cdots. $$

Then again the class $S(\Omega, J)$ is either void or $G_{\delta}$ -dense in $\widetilde{R}(\Omega)$. This is the content of Section 3. Also, quite often it holds $\widetilde{R}(\Omega) = A(\Omega)$.

In Section 4 several examples are given. In one such example it holds $S(\Omega, J) = \emptyset$, but we do not have an analogous example if in addition it holds ${\overline{\Omega}}^{\circ} = \Omega$. In every other example in Section 4 it holds $S(\Omega, J) \neq \emptyset$ and the generic function $f$ in $A(\Omega)$ is nowhere differentiable with respect  to the position on $\partial \Omega$.

\section{A dichotomy result for bounded domains}

Let $\Omega$ be a bounded domain in $\mathbb{C}$. We denote by $R(\Omega)$ the set of uniform limits on $\overline{\Omega}$ of rational functions with poles off $\overline{\Omega}$. The space $R(\Omega)$ is a Banach space endowed with the supremum norm on $\overline{\Omega}$. 

More generally, if $K \subseteq \mathbb{C}$ is a compact set, then $R(K)$ is the set of uniform limits on $K$ of rational functions with poles off $K$, endowed with the supremum norm on $K$. The space $R(K)$ is also a Banach space. Obviously, it holds $R(\Omega) = R(\overline{\Omega})$ for every bounded domain $\Omega \subseteq \mathbb{C}$.

Let $\Omega \subseteq \mathbb{C}$ be a bounded domain and $J \subseteq \partial \Omega$ be a compact set without isolated points. We denote with $S(\Omega, J)$ the following class of functions 
$$ S(\Omega, J) = \{f \in R(\Omega): \limsup_{\substack{z \to z_0 \\ z \in J \setminus \{ z_0 \}}} \Big| \frac{f(z) - f(z_0)}{z - z_0} \Big| = + \infty \; \text{for every} \; z_0 \in J \}. $$ 

\begin{theorem} \label{main theorem of 2} Under the above assumptions and notations, the class $S(\Omega, J)$ is either void or $G_{\delta}$ - dense in $R(\Omega)$.

\end{theorem}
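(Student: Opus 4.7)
My plan is a standard Baire-category argument. The first step is to express $S(\Omega, J)$ as a countable intersection of open sets: setting
$$E(N,n) = \{f \in R(\Omega) : \forall z_0 \in J,\ \exists z \in J \setminus \{z_0\} \text{ with } |z-z_0| < 1/n \text{ and } |(f(z)-f(z_0))/(z-z_0)| > N\},$$
the rational unfolding of $\limsup = +\infty$ yields $S(\Omega, J) = \bigcap_{N,n \in \mathbb{N}} E(N,n)$. Openness of each $E(N,n)$ in $R(\Omega)$ is where compactness of $J$ is used. Given $f \in E(N,n)$, for every $z_0 \in J$ a witness $w(z_0) \in J$ exists with strict inequality $|(f(w(z_0))-f(z_0))/(w(z_0)-z_0)| > N$; by continuity of $f$ the same $w(z_0)$ keeps working, with a slightly smaller margin, on a small $J$-neighborhood $V_{z_0}$ of $z_0$ chosen so that $|w(z_0)-z_0'|$ stays bounded below by some $d_{z_0} > 0$ on $V_{z_0}$. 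A finite subcover of $J$ by such $V_{z_i}$ then provides finitely many witnesses $w_i$, and since the denominators $|w_i - z_0'|$ are uniformly bounded below, the difference quotient $(f(w_i)-f(z_0'))/(w_i-z_0')$ depends continuously on $f$ in sup-norm, so a single sup-norm ball around $f$ remains inside $E(N,n)$.

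The heart of the proof is density of $S(\Omega, J)$ in $R(\Omega)$ under the assumption $S(\Omega, J) \neq \emptyset$. Fix $f_0 \in S(\Omega, J)$, $g \in R(\Omega)$, and $\varepsilon > 0$. By the very definition of $R(\Omega)$, pick a rational function $r$ with poles off $\overline{\Omega}$ so that $\|g - r\|_{\overline{\Omega}} < \varepsilon/2$. The crucial feature of $r$ is that it is holomorphic on an open neighborhood of $\overline{\Omega}$, so at any boundary point $z_0$ the derivative $r'(z_0)$ exists and the difference quotients $(r(z)-r(z_0))/(z-z_0)$ stay bounded as $z \to z_0$. Choose $0 < \delta < \varepsilon/(2\|f_0\|)$ and set $h = r + \delta f_0 \in R(\Omega)$; then $\|h - g\| < \varepsilon$. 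For every $z_0 \in J$, take a sequence $z_k \in J \setminus \{z_0\}$ with $z_k \to z_0$ along which $|(f_0(z_k)-f_0(z_0))/(z_k - z_0)| \to \infty$; the triangle inequality yields
$$\left| \frac{h(z_k) - h(z_0)}{z_k - z_0} \right| \geq \delta \left| \frac{f_0(z_k) - f_0(z_0)}{z_k - z_0} \right| - \left| \frac{r(z_k) - r(z_0)}{z_k - z_0} \right| \longrightarrow +\infty,$$
since the subtracted term is bounded as $z_k \to z_0$. Hence $h \in S(\Omega, J)$, proving density.

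Combining the three observations gives the dichotomy: either $S(\Omega, J) = \emptyset$, or $S(\Omega, J)$ is a dense $G_\delta$ in the Banach space $R(\Omega)$. The only moderately technical step is the compactness-based proof of openness of $E(N,n)$, where one must be careful to shrink the neighborhoods $V_{z_i}$ so that each chosen witness is separated from the base point by a uniform positive distance. The density step is essentially automatic once one notices that the rational approximants built into the definition of $R(\Omega)$ are holomorphic at every boundary point, so their bounded difference quotients cannot mask the infinite blow-up contributed by the small perturbation $\delta f_0$.
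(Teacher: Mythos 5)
Your proposal is correct and follows essentially the same route as the paper: write $S(\Omega,J)$ as a countable intersection of sets whose openness rests on the compactness of $J$, then obtain density by adding a rational approximant (with bounded difference quotients on $\overline{\Omega}$) to a fixed element of $S(\Omega,J)$, and finish with Baire's theorem. The only cosmetic differences are that you prove openness of $E(N,n)$ directly via a finite subcover with uniform margins, whereas the paper shows the complement is sequentially closed, and that you perturb by $r+\delta f_0$ rather than approximating $g-f$ by a rational function $q$ and using $f+q$; both variants are sound.
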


\begin{proof} We suppose that it holds $S(\Omega, J) \neq \emptyset$ and let $f \in S(\Omega, J)$. We denote with $E_n$ the following set 
$$ E_n = \{g \in R(\Omega): \; \text{for every} \; z_0 \in J \; \text{there exists a} \; z \in ( J \setminus \{ z_0 \}) \cap D(z_0, \frac{1}{n}) $$
$$ \text{such that} \; \Big| \frac{f(z) - f(z_0)}{z - z_0} \Big| > n \} $$ 

where $D(z_0, \frac{1}{n})$ denotes the open  disc centered at $z_0$ and with radius $\frac{1}{n}$. 

Obviously, it holds
$$ S(\Omega, J) = \bigcap_{n = 1}^{+ \infty} E_n. $$

Firstly we prove that each $E_n$ is an open subset of $(R(\Omega), || \cdot ||_{\infty})$, or equivalently, that each $R(\Omega) \setminus E_n$ is a closed set. Indeed, let $\{ g_m \}_{m \geq 1} \subseteq R(\Omega) \setminus E_n$ and $g \in R(\Omega)$ such that $g_m \to g$. Then, for every $m \geq 1$, there exists a $z_m \in J$ satisfying  
$$ \Big| \frac{g_m(z) - g_m(z_m)}{z - z_m} \Big| \leq n $$

for every $z \in ( J \setminus \{ z_m \}) \cap D(z_m, \frac{1}{n})$. Since $J$ is a compact set, there exists a subsequence of $\{ z_m \}_{m \geq 1}$ which converges to a single point $z_0 \in J$. Without loss of generality, we may assume that $\{ z_m \}_{m \geq 1}$ converges to $z_0$. Let $z \in ( J \setminus \{ z_0 \}) \cap D(z_0, \frac{1}{n})$ be a fixed point. Then, there exists an index $m_0 \in \mathbb{N}$ satisfying $z \in ( J \setminus \{ z_m \}) \cap D(z_m, \frac{1}{n})$ for every $m \geq m_0$. Consequently, for every $m \geq m_0$ it holds 
$$ |g(z) - g(z_0)| \leq |g(z) - g_m(z)| + |g_m(z) -  g_m(z_m)| + |g_m(z_m) - g(z_0)| \leq $$
$$ \leq |g(z) - g_m(z)| + |g_m(z_m) -  g(z_0)| + n|z_m - z|. $$

Therefore, by taking limits in the previous relation as $m \to + \infty$ we obtain that it holds $|g(z) - g(z_0)| \leq n |z - z_0|$ for every $z \in ( J \setminus \{ z_0 \}) \cap D(z_0, \frac{1}{n})$, because $g_m \to g$ uniformly and $z_m \to z_0$.

Thus, $g \in R(\Omega) \setminus E_n$ and as a result, $E_n$ is a closed set. Therefore, its complement is an open set for every $n \geq 1$.

It remains to prove the density of $S(\Omega, J)$. Let $g \in R(\Omega)$. Then, there exists a rational function $q \equiv q_{\varepsilon}$ with poles off $\overline{\Omega}$ such that $|| (g - f) - q ||_{\infty} < \varepsilon$ for a fixed $\varepsilon >0$. Since $q'$ is continuous on $\overline{\Omega}$, there exists $M < + \infty$ satisfying $||q'||_{\infty} \leq M$. Considering a fixed point $z_0 \in J$ we can find a sequence $\{ z_m \}_{m \geq 1}$ in $J \setminus \{ z_0\}$ such that $z_m \to z_0$ satisfying
$$ \lim_{m \to + \infty} \Big| \frac{f(z_m) - f(z_0)}{z_m - z_0} \Big| = + \infty. $$

By the triangle inequality, it holds 
$$ \Big| \frac{(f + q)(z_m) - (f + q)(z_0)}{z_m - z_0} \Big| \geq \Big| \frac{f(z_m) - f(z_0)}{z_m - z_0} \Big| - \Big| \frac{q(z_m) - q(z_0)}{z_m - z_0} \Big|. $$

At the same time, it also holds
$$ \lim_{m \to + \infty} \Big| \frac{q(z_m) - q(z_0)}{z_m - z_0} \Big| = |q'(z_0)| \leq M. $$

Therefore, by combining the previous two relations, we obtain
$$ \lim_{m \to + \infty} \Big| \frac{(f + q)(z_m) - (f + q)(z_0)}{z_m - z_0} \Big| = + \infty $$ 

and thus  
$$ \limsup_{\substack{z \to z_0 \\ z \in J \setminus \{ z_0 \}}} \Big| \frac{(f + q)(z) - (f + q)(z_0)}{z - z_0} \Big| = + \infty $$ 

for every $z_0 \in J$. Consequently, we deduce that $(f +  q) \in S(\Omega , J)$. Since $||g - (f + q)||_{\infty} < \varepsilon$ and $\varepsilon > 0$ is arbitrary, it follows that $g \in \overline{S(\Omega , J)}$. Thus, we have proved the density of $S(\Omega , J)$. Baire's Theorem completes the proof.

\end{proof}

Let $K \subseteq \mathbb{C}$ be a compact set. Then, we denote with $A(K)$ the set of all functions $f:K \to \mathbb{C}$ which are continuous on $K$ and holomorphic in $K^{\circ}$ (if $K^{\circ} = \emptyset$, then $A(K) = C(K)$). We endow the space $A(K)$ with the supremum norm on $K$ and thus, $A(K)$ becomes a Banach space. If $\Omega$ is a bounded domain in $\mathbb{C}$, then $R(\Omega)$ is a closed  subspace of $A(\overline{\Omega})$.

\begin{definition} [Sets of approximation] We say that a compact set $K \subseteq \mathbb{C}$ is a set of approximation if it holds $A(K) = R(K)$.

\end{definition}

A characterization of whether a compact set $K$ is a set of approximation is given in \cite{VITUSHKIN}, using the notion of continuous analytic capacity. Furthermore, sufficient conditions ensuring that a compact set $K$ is a set of approximation are the following:

\begin{itemize}

\item[$(i)$] $(\mathbb{C} \cup \{ \infty \}) \setminus K$ has finitely many connected components (\cite{RUDIN}, exercise 1, chapter 20, page  394)

\item[$(ii)$] The diameters of the connected components of the complement of $K$ (even though there may be infinitely many such components) are uniformly bounded away from zero (\cite{GAMELIN}).

\item[$(iii)$] $K$ has area measure zero (\cite{GAMELIN}).

\end{itemize}

\begin{remark} If $\Omega \subseteq \mathbb{C}$ is a bounded domain in $\mathbb{C}$ such that $\overline{\Omega}$ is a compact set of approximation, then Theorem \ref{main theorem of 2} ensures that the class $S(\Omega, J)$ is either void or $G_{\delta}$ - dense in $A(\overline{\Omega}) = R(\Omega)$. We define $A(\Omega)$ to be the class of functions $f: \overline{\Omega} \to \mathbb{C}$ continuous on $\overline{\Omega}$ and holomorphic in $\Omega$, endowed with the supremum norm on $\overline{\Omega}$. This is also a Banach space and $A(\overline{\Omega}) \subseteq A(\Omega)$ but we do not always have the equality $A(\overline{\Omega}) = A(\Omega)$. If $(\overline{\Omega})^{\circ} = \Omega$; that is, if $\Omega$ is a Carath\'{e}odory domain, then we have the equality $A(\overline{\Omega}) = A(\Omega)$ and in this case Theorem \ref{main theorem of 2} ensures that $S(\Omega, J)$ is either void or $G_{\delta}$ - dense in $A(\Omega)$.

\end{remark}

\section{Unbounded domains}

Let $E \subseteq \mathbb{C}$ be an unbounded open set. We denote with $\widetilde{R}(E)$ the set of all functions which are uniform limits on each compact subset of $\overline{E}$ of rational functions with poles off $\overline{E}$. The natural topology of $\widetilde{R}(E)$ is the topology of uniform convergence on each compact subset of $\overline{E}$. Equivalently, it is defined by the sequence of seminorms 
$$ \sup_{\substack{z \in \overline{E} \\ |z| \leq n}} |f(z)|, f \in \widetilde{R}(E) \; \text{for} \; n = 1, 2, \cdots. $$

Moreover the space $\widetilde{R}(E)$ endowed with these seminorms is a Fr\'{e}chet space.

\begin{theorem} \label{main theorem of 3} Let $\Omega \subseteq \mathbb{C}$ be an unbounded domain and $J \subseteq \partial \Omega$ be a compact set without isolated points. Then, the class 
 
$$ S(\Omega, J) = \{f \in \widetilde{R}(\Omega): \limsup_{\substack{z \to z_0 \\ z \in J \setminus \{ z_0 \}}} \Big| \frac{f(z) - f(z_0)}{z - z_0} \Big| = + \infty \; \text{for every} \; z_0 \in J \} $$ 

is either void or $G_{\delta}$ - dense in $\widetilde{R}(\Omega)$. 

\end{theorem}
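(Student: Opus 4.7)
The plan is to imitate the proof of Theorem \ref{main theorem of 2} line by line, swapping the Banach structure on $R(\Omega)$ for the Fréchet structure on $\widetilde{R}(\Omega)$; since every Fréchet space is a Baire space, the Baire category argument still applies. Assuming $S(\Omega,J)\neq\emptyset$, I would fix $f\in S(\Omega,J)$ and define $E_n$ exactly as before, so that $S(\Omega,J)=\bigcap_{n\ge 1}E_n$. Compactness of $J$ gives some $m_0\in\mathbb{N}$ with $J\subseteq\{|z|\le m_0\}$, and for $m\ge m_0$ the seminorm $p_m(h)=\sup_{z\in\overline{\Omega},\,|z|\le m}|h(z)|$ dominates $\sup_J|h|$; this is the only property of the sup norm that was actually used in the bounded proof.

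For openness of each $E_n$, I would take a sequence $\{g_k\}\subseteq\widetilde{R}(\Omega)\setminus E_n$ converging to some $g$ in $\widetilde{R}(\Omega)$. Convergence in the Fréchet topology forces uniform convergence on the compact set $\{|z|\le m_0\}\cap\overline{\Omega}$, hence on $J$. Choosing witnesses $z_k\in J$ for each $g_k\notin E_n$, extracting a subsequence $z_k\to z_0\in J$, and rerunning the three-term triangle inequality from the proof of Theorem \ref{main theorem of 2} yields $|g(z)-g(z_0)|\le n|z-z_0|$ on $(J\setminus\{z_0\})\cap D(z_0,1/n)$, so $g\notin E_n$. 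Metrizability of the Fréchet topology turns this sequential closedness into closedness.

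For density, I would fix $g\in\widetilde{R}(\Omega)$ together with a basic neighborhood $\{h:p_N(h-g)<\varepsilon\}$, and use the definition of $\widetilde{R}(\Omega)$ together with the fact that $g-f\in\widetilde{R}(\Omega)$ to approximate $g-f$ in $p_N$ by a rational function $q$ with poles off $\overline{\Omega}$. Such a $q$ is holomorphic on an open neighborhood of the compact set $J$, so $|q'|$ is bounded there, and the same triangle-inequality trick as in the bounded case gives $f+q\in S(\Omega,J)$. Since $f+q$ lies in the prescribed neighborhood of $g$, density follows and Baire's theorem for Fréchet spaces concludes the argument. The only real novelty compared to Theorem \ref{main theorem of 2} is the bookkeeping with seminorms; all analytic content is inherited from the bounded argument, so I do not anticipate any serious obstacle, merely the careful selection of the seminorm index $m\ge\max(m_0,N)$ when invoking each previous estimate.
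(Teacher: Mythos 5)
Your proposal is correct and follows essentially the same route as the paper's own proof: the same sets $E_n$, the same compactness/triangle-inequality argument for closedness of $\widetilde{R}(\Omega)\setminus E_n$ (using that Fr\'echet convergence implies uniform convergence on the compact set $J$), and the same observation that $f+q\in S(\Omega,J)$ for any rational $q$ with poles off $\overline{\Omega}$ because $q'$ is bounded on $J$. The only cosmetic difference is that you phrase density via a basic seminorm neighborhood while the paper uses an approximating sequence $q_m\to g-f$; these are equivalent in a metrizable space.
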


\begin{proof} We suppose that $S(\Omega, J) \neq \emptyset$ and let $f \in S(\Omega, J)$. We denote with $E_n$ the following set 
$$ E_n = \{g \in \tilde{R}(\Omega): \; \text{for every} \; z_0 \in J \; \text{there exists a} \; z \in ( J \setminus \{ z_0 \}) \cap D(z_0, \frac{1}{n}) $$
$$ \text{such that} \; \Big| \frac{g(z) - g(z_0)}{z - z_0} \Big| > n \} $$ 

Obviously, it holds
$$ S(\Omega, J) = \bigcap_{n = 1}^{+ \infty} E_n. $$

Firstly we show that each $E_n$ is an open subset of $\widetilde{R}(\Omega)$, or equivalently, that each $\widetilde{R}(\Omega) \setminus E_n$ is closed in $\widetilde{R}(\Omega)$. Indeed, let $\{ g_m \}_{m \geq 1}$ be a sequence of functions in $\widetilde{R}(\Omega) \setminus E_n$ which converges uniformly on the compact subsets of $\overline{\Omega}$ to a function $g \in \widetilde{R}(\Omega)$. Then, for every $m \geq 1$, there exists a $z_m \in J$ satisfying  
$$ \Big| \frac{g_m(z) - g_m(z_m)}{z - z_m} \Big| \leq n $$

for every $z \in ( J \setminus \{ z_m \}) \cap D(z_m, \frac{1}{n})$. Since $J$ is a compact set, there exists a subsequence of $\{ z_m \}_{m \geq 1}$ which converges to a single point $z_0 \in J$. Without loss of generality, we may assume that $\{ z_m \}_{m \geq 1}$ converges to $z_0$. Let $z \in ( J \setminus \{ z_0 \}) \cap D(z_0, \frac{1}{n})$ be a fixed point. Then, there exists an index $m_0 \in \mathbb{N}$ satisfying $z \in ( J \setminus \{ z_m \}) \cap D(z_m, \frac{1}{n})$ for every $m \geq m_0$. Consequently, for every $m \geq m_0$ it holds 
$$ |g(z) - g(z_0)| \leq |g(z) - g_m(z)| + |g_m(z) -  g_m(z_m)| + |g_m(z_m) - g(z_0)| \leq $$
$$ \leq |g(z) - g_m(z)| + |g_m(z_m) -  g(z_0)| + n|z_m - z|. $$

Furthermore, the sequence $\{ g_m \}_{m \geq 1}$ converges uniformly to $g \in \widetilde{R}(\Omega)$ on $J$, since $J \subseteq \overline{\Omega}$ is a compact set. Therefore, by taking limits in the previous relation as $m \to + \infty$ we obtain that $|g(z) - g(z_0)| \leq n |z - z_0|$ for every $z \in ( J \setminus \{ z_0 \}) \cap D(z_0, \frac{1}{n})$.

Thus $g \in \widetilde{R}(\Omega) \setminus E_n$ and as a result, $E_n$ is a closed set. Therefore, its complement is an open set for every $n \geq 1$.

It remains to prove the density of $S(\Omega, J)$. Let $g \in \widetilde{R}(\Omega)$. Then, there exists a sequence of rational functions $\{ q_m \}_{m \geq 1}$ with poles off $\overline{\Omega}$ which converges uniformly to the function $g - f$ on each compact subset of $\overline{\Omega}$. Obviously, the sequence $\{ f + q_m \}_{m \geq 1}$ converges uniformly to $g$ on the compacts subsets of $\overline{\Omega}$. Now, let $q$ be a rational function with poles off $\overline{\Omega}$. Since $q'$ is continuous on the compact set $J \subseteq \partial \Omega$, there exists a $M < + \infty$ such that $|q'(z)| \leq M$ for every $z \in J$. Let $z_0 \in J$; since it holds
$$ \limsup_{\substack{z \to z_0 \\ z \in J \setminus \{ z_0 \}}} \Big| \frac{f(z) - f(z_0)}{z - z_0} \Big| = + \infty $$

we can find a sequence $\{ z_m \}_{m \geq 1}$ such that $z_m \in (J \setminus \{ z_0 \}) \cap B(z_0, \frac{1}{n})$ for every $m \geq 1$, also $z_m \to z_0$ and 
$$ \lim_{m \to + \infty} \Big| \frac{f(z_m) - f(z_0)}{z_m - z_0} \Big| = + \infty. $$

At the same time, it also holds 
$$ \Big| \frac{(f + q)(z_m) - (f + q)(z_0)}{z_m - z_0} \Big| \geq \Big| \frac{f(z_m) - f(z_0)}{z_m - z_0} \Big| - \Big| \frac{q(z_m) - q(z_0)}{z_m - z_0} \Big| $$

and obviously it holds
$$ \lim_{m \to + \infty} \Big| \frac{q(z_m) - q(z_0)}{z_m - z_0} \Big| = |q'(z_0)| \leq M. $$

Therefore, by combining the previous two relations, we obtain
$$ \lim_{m \to + \infty} \Big| \frac{(f + q)(z_m) - (f + q)(z_0)}{z_m - z_0} \Big| = + \infty $$

and thus 
$$ \limsup_{\substack{z \to z_0 \\ z \in J \setminus \{ z_0 \}}} \Big| \frac{(f + q)(z) - (f + q)(z_0)}{z - z_0} \Big| = + \infty $$

for every $z_0 \in J$.Therefore, we can deduce that $(f + q_n) \in S(\Omega, J)$ for every $n \geq 1$ and thus, since $f + q_n \to g$ in the topology of $\widetilde{R}(E)$, it follows that $g \in \overline{S(\Omega, J)}$. Thus, we have proved the density of $S(\Omega, J)$. Baire's Theorem completes the proof.

\end{proof}

If $E \subseteq \mathbb{C}$ is a closed set, then by $A(E)$ we denote the set of all functions $f:E \to \mathbb{C}$ which are continuous on $E$ and holomorphic in $E^{\circ}$ (if $E^{\circ} = \emptyset$, then $A(E) = C(E)$). In any case, $A(E)$ is a Fr\'{e}chet space when endowed with the seminorms 
$$ \sup_{\substack{z \in E \\ |z| \leq n}} |f(z)|, f \in A(E) \; \text{for} \; n = 1, 2, \cdots. $$

In addition, if $\Omega \subseteq \mathbb{C}$ is an unbounded domain and $\overline{\Omega} \cap \overline{D(0, n)}$ is a compact set of approximation for every $n \geq 1$, then $\widetilde{R}(\Omega) = A(\overline{\Omega})$. Indeed, obviously it holds that $\widetilde{R}(\Omega) \subseteq A(\overline{\Omega})$. Let $g \in A(\overline{\Omega})$ and $n \geq 1$. Since $\overline{\Omega} \cap \overline{D(0, n)}$ is a compact set of approximation, there exists a rational function $\phi_n$ with poles off $X_n = \overline{\Omega} \cap \overline{D(0, n)}$, such that 
$$ \sup_{z \in X_n} |g(z) - \phi_n(z)| < \frac{1}{2n}. $$

The function $\phi_n$ is holomorphic in an open set containing the compact set $X_n$. Now, let $B \subseteq (\mathbb{C} \cup \{ \infty \}) \setminus X_n$ be a set that intersects each component of $(\mathbb{C} \cup \{ \infty \}) \setminus X_n$. We also assume that the set $B$ satisfies $B \cap L = \{ \infty \}$, where $L$ is the component of $(\mathbb{C} \cup \{ \infty \}) \setminus X_n$ containing $\infty$. From Runge's theorem, which can be applied because $\phi_n$ is holomorphic in an open set containing $X_n$, we can find a rational function $g_n$ with poles in $B$ such that 
$$ \sup_{X_n} |\phi_n(z) - g_n(z)| < \frac{1}{2n}. $$

Obviously, $||g - g_n ||_{\infty} < \frac{1}{n}$ on $X_n = \overline{\Omega} \cap \overline{B(0, n)}$. Now, let $r$ be a pole of $g_n$. There are two possible cases.

\begin{itemize}

\item[$\bullet$] If $r \in \overline{D(0, n)}$, then $r \not\in \overline{\Omega}$ since $r \not\in X_n = \overline{\Omega} \cap \overline{D(0, n)}$.                 

\item[$\bullet$] If $r \not\in \overline{D(0, n)}$, then $r \in \Big[ (\mathbb{C} \cup \{ \infty \}) \setminus X_n \Big] \cap L$ and thus, $r = \infty \not\in \overline{\Omega}$. 

\end{itemize}

In every case, we have that it holds $r \not\in \overline{\Omega}$. Consequently, for every $n \geq 1$, there exists a rational function $g_n$ with poles off $\overline{\Omega}$ such that $|g(z) - g_n(z)| < \frac{1}{n}$ for every $z \in \overline{\Omega} \cap \overline{D(0, n)}$. Moreover, the sequence $\{ g_n \}_{n \geq 1}$ converges uniformly to $g$ on the compact subsets of $\overline{\Omega}$ and thus, $g \in \widetilde{R}(\Omega)$. Therefore, we obtain that it holds $\widetilde{R}(\Omega) = A(\overline{\Omega})$.

However, we do not know if the converse holds; that is, if for an unbounded domain $\Omega \subseteq \mathbb{C}$ it holds $\widetilde{R}(\Omega) = A(\overline{\Omega})$, is it necessarily true that $\overline{\Omega} \cap \overline{D(0, \lambda_n)}$ is a compact set of approximation for all elements $\lambda_n$ of a sequence $\{ \lambda_n \}_{n \geq 1} \subseteq \mathbb{R}$ converging to $+ \infty$?

\section{Examples}

In this section we give specific examples. In each such example we clarify if the class $S(\Omega, J)$ is void or $G_{\delta}$ - dense.

\begin{example} \label{first example} Let $\Omega = D$ be the open unit disc and $\mathbb{T} = \partial D$. We consider the function $g: D \to \mathbb{C}$ defined as follows
$$ g(z) = \sum_{n = 0}^{+ \infty} a^n z^{b^n} \; \text{for every} \; z \in D. $$ 

where $a \in (0, 1), b$ an odd integer and $ab > 1 + \frac{3 \pi}{2}$. Then, it is easy to see that it holds $Re(g) = u$, where $u$ is the Weierstrass function (see also \cite{ESKENAZIS}, \cite{ESKENAZIS.MAKRIDIS}, \cite{MASTRANTONIS.PANAGIOTIS}). It is known that it holds $g \in A(D)$. In addition,
$$ \limsup_{\substack{\theta \to \theta_0 \\ \theta \in (\theta_0 - 1, \theta_0 + 1) \setminus \{ \theta_0 \}}} \Big| \frac{g(e^{i \theta}) - g(e^{i \theta_0})}{\theta - \theta_0} \Big| = + \infty $$

for every $\theta_0 \in [0, 2 \pi]$. We consider the following class of functions 
$$ S(D, \mathbb{T}) = \{ f \in A(D): \limsup_{\substack{z \to z_0 \\ z \in \mathbb{T} \setminus \{ z_0 \}}} \Big| \frac{f(z) - f(z_0)}{z - z_0} \Big| = + \infty \; \text{for every} \; z_0 \in \mathbb{T} \}. $$

Then, since $A(D) = R(\overline{D})$ it either holds $S(D, \mathbb{T}) = \emptyset$ or $S(D, \mathbb{T})$ is $G_{\delta}$ - dense in $(A(\Omega), || \cdot ||_{\infty})$, according to Theorem \ref{main theorem of 2}. Let $z_0 = e^{i \theta_0} \in \mathbb{T}$. We consider a sequence $\{ \theta_n \}_{n \geq 1}$ in $\mathbb{R} \setminus \{ \theta_0 \}$ with $\theta_n \to \theta_0$ satisfying
$$ \lim_{n \to + \infty} \Big| \frac{g(e^{i \theta_n}) - g(e^{i \theta_0})}{\theta_n - \theta_0} \Big| = + \infty. $$

In addition, for every $n \geq 1$ we set $z_n = e^{i \theta_n} \in \mathbb{T}$. Then, $z_n \to z_0$ and there exists an index $n_0 \in \mathbb{N}$ such that $z_n \neq z_0$ for every $n \geq n_0$. For every $n \geq n_0$ we have
$$ \frac{g(z_n) - g(z_0)}{z_n - z_0} = \frac{g(e^{i \theta_n}) - g(e^{i \theta_0})}{\theta_n - \theta_0} \cdot \frac{\theta_n - \theta_0}{e^{i \theta_n} - e^{i \theta_0}}. $$

Since
$$ \lim_{n \to + \infty} \frac{\theta_n - \theta_0}{e^{i \theta_n} - e^{i \theta_0}} = \frac{1}{i e^{i \theta_0}} \neq 0 $$

we obtain that it holds
$$ \lim_{n \to + \infty} \Big| \frac{g(z_n) - g(z_0)}{z_n - z_0} \Big| = + \infty $$

and therefore
$$ \limsup_{\substack{z \to z_0 \\ z \in \mathbb{T} \setminus \{ z_0 \}}} \Big| \frac{g(z) - g(z_0)}{z - z_0} \Big| = + \infty $$

for every $z_0 \in \mathbb{T}$. The previous relation implies that $g \in S(D, \mathbb{T})$ and thus $S(D, \mathbb{T}) \neq \emptyset$; according to Theorem \ref{main theorem of 2}, the class $S(D, \mathbb{T})$ is $G_{\delta}$ - dense in $A(D)$. 

\end{example}

\begin{example} \label{second example}

Let $\Omega = \{ z \in \mathbb{C}: Re(z) > 0 \}$. For every $n \geq 1$ we consider the following classes of functions 
$$ S(\Omega, J_n) = \{ f \in A(\Omega): \limsup_{\substack{z \to z_0 \\ z \in J_n \setminus \{ z_0 \}}} \Big| \frac{f(z) - f(z_0)}{z - z_0} \Big| = + \infty \; \text{for every} \; z_0 \in J_n \} $$

where $J_n = [- in, + in]$. In addition, let
$$ S(\Omega, J) = \{ f \in A(\Omega): \limsup_{\substack{z \to z_0 \\ z \in J \setminus \{ z_0 \}}} \Big| \frac{f(z) - f(z_0)}{z - z_0} \Big| = + \infty \; \text{for every} \; z_0 \in J \} $$

where 
$$ J = \bigcup_{n = 1}^{+ \infty} J_n = i \mathbb{R}. $$ 

Then, the class $S(\Omega, J)$ is $G_{\delta}$ - dense in $A(\Omega)$, where the space $A(\Omega)$ is endowed with the topology of uniform convergence on the compact subsets of $\overline{\Omega}$. 

\end{example}

\begin{proof} We prove that each $S(\Omega, J_n)$ is $G_{\delta}$ - dense in $A(\Omega) = \widetilde{R}(\Omega)$. Since $(\mathbb{C} \cup \{ \infty \}) \setminus (\overline{\Omega} \cap \overline{D(0, n)})$ is a connected set, it follows that $A((\overline{\Omega})^{\circ} \cap D(0, n)) = R(\overline{\Omega} \cap \overline{D(0, n)})$ for every $n \geq 1$ and thus, according to Section 3, it is enough to prove that each $S(\Omega, J_n)$ is non - void.

We consider the entire function $\phi: \mathbb{C} \to \mathbb{C}$ satisfying $\phi(w) = e^{- w}$ for every $w \in \mathbb{C}$. Obviously, $\phi'(w) = - e^{- w} \neq 0$ for every $w \in \mathbb{C}$. We also consider the function $f: \Omega \to \mathbb{C}$ with $f = g \circ \phi$, where $g$ is the function defined in Example \ref{first example}. Obviously, $\phi(\Omega) \subseteq D(0, 1)$ and $\phi(J_n) \subseteq \mathbb{T}$. The reader can easily verify that it holds $f \in S(\Omega, J_n)$ for every $n \geq 1$. Thus, the class $S(\Omega, J_n)$ is $G_{\delta}$ - dense in $A(\Omega)$ and since it holds 
$$ S = \bigcap_{n = 1}^{+ \infty} S(\Omega, J_n) $$

Baire's Theorem implies that the class $S(\Omega, J)$ is $G_{\delta}$ - dense in $A(\Omega)$.

\end{proof}

\begin{remark} Example \ref{second example} shows that in the definition of the class $S(\Omega, J)$ in Section 3 the set $J$ may not be a compact one.

\end{remark}

\begin{example} \label{third example}

We consider the following sets 
$$ A = \{ r e^{i \frac{3 \pi}{4}}: 0 \leq r \leq 1 \} $$
$$ B = \{ e^{i \theta}: \frac{\pi}{4} \leq \theta \leq \frac{3 \pi}{4} \} $$ 

and 
$$ C = \{ r e^{i \frac{\pi}{4}}: 0 \leq r \leq 1\}. $$ 

Let $\Omega$ be the Jordan domain bounded by $A \cup B \cup C$. It clearly holds $\partial \Omega = A \cup B \cup C$. Then, the class of functions
$$ S(\Omega, \partial \Omega) = \{ f \in A(\Omega) : \limsup_{\substack{z \to z_0 \\ z \in \partial \Omega \setminus \{ z_0 \}}} \Big| \frac{f(z) - f(z_0)}{z - z_0} \Big| = + \infty \; \text{for every} \; z_0 \in \partial \Omega \}. $$

is $G_{\delta}$ - dense in $(A(\Omega), || \cdot ||_{\infty})$.

\end{example}

\begin{proof} We consider the following classes of functions
$$ S(\Omega, A) = \{ f \in A(\Omega) : \limsup_{\substack{z \to z_0 \\ z \in A \setminus \{ z_0 \}}} \Big| \frac{f(z) - f(z_0)}{z - z_0} \Big| = + \infty \; \text{for every} \; z_0 \in A \} $$
$$ S(\Omega, B) = \{ f \in A(\Omega) : \limsup_{\substack{z \to z_0 \\ z \in B \setminus \{ z_0 \}}} \Big| \frac{f(z) - f(z_0)}{z - z_0} \Big| = + \infty \; \text{for every} \; z_0 \in B \} $$
$$ S(\Omega, C) = \{ f \in A(\Omega) : \limsup_{\substack{z \to z_0 \\ z \in C \setminus \{ z_0 \}}} \Big| \frac{f(z) - f(z_0)}{z - z_0} \Big| = + \infty \; \text{for every} \; z_0 \in C \}. $$

Obviously, $S(\Omega, A) \cap S(\Omega, B) \cap S(\Omega, C) \subseteq S(\Omega, \partial \Omega)$. From Example \ref{second example}, there exists a function $h \in A(R)$ (where $R$ is the open right half plane) satisfying
$$ \limsup_{\substack{z \to z_0 \\ z \in i \mathbb{R} \setminus \{ z_0 \}}} \Big| \frac{h(z) - h(z_0)}{z - z_0} \Big| = + \infty $$

for every $z_0 \in i \mathbb{R}$. In the same way, one can prove that there exists a function $\phi \in A(L)$ (where $L$ is the open left half plane) satisfying
$$ \limsup_{\substack{z \to z_0 \\ z \in i \mathbb{R} \setminus \{ z_0 \}}} \Big| \frac{\phi(z) - \phi(z_0)}{z - z_0} \Big| = + \infty $$

for every $z_0 \in i \mathbb{R}$.

We consider the functions $\omega_1, \omega_2 \in A(\Omega)$ with 
$$ \omega_1(z) = h(e^{-i \frac{\pi}{4} z}) $$ 
and 
$$ \omega_2(z) = \phi(e^{i \frac{\pi}{4} z}) $$ 

for every $z \in \Omega$. We have proved that $\omega_1 \in S(\Omega, A)$ and $\omega_2 \in S(\Omega, C)$; thus, according to Section 2, the classes $S(\Omega, A)$ and $S(\Omega, C)$ are $G_{\delta}$ - dense in $A(\Omega)$. In addition, if $g$ is the function defined in Example \ref{first example}, then $(g \restriction_{\Omega}) \in S(\Omega, B)$ and therefore, the class $S(\Omega, B)$ is also $G_{\delta}$ - dense in $A(\Omega)$. According to Baire's Theorem, it follows that the class $S(\Omega, A) \cap S(\Omega, B) \cap S(\Omega, C)$ is $G_{\delta}$ - dense in $A(\Omega)$. Since $S(\Omega, A) \cap S(\Omega, B) \cap S(\Omega, C) \subseteq S(\Omega, J)$, we obtain that it holds $S(\Omega, J) \neq \emptyset$ and thus, the class $S(\Omega, J)$ is also $G_{\delta}$ - dense in $A(\Omega)$.

\end{proof}

\begin{example} \label{fourth example}

Let $\Omega$ be the interior of a convex polygonal domain, with 
$$ \partial \Omega = [a_0, a_1] \cup [a_1, a_2] \cup \cdots \cup [a_n, a_0]. $$ 

Then, the class of functions 
$$ S(\Omega, \partial \Omega) = \{ f \in A(\Omega) : \limsup_{\substack{z \to z_0 \\ z \in \partial \Omega \setminus \{ z_0 \}}} \Big| \frac{f(z) - f(z_0)}{z - z_0} \Big| = + \infty $$ 
$$ \text{for every} \; z_0 \in \partial \Omega \} $$

is $G_{\delta}$ - dense in $(A(\Omega), || \cdot ||_{\infty})$.

\end{example}

\begin{proof} We consider the following classes of functions 
$$ S_i(\Omega, [a_i, a_{i + 1}]) = \{ f \in A(\Omega): \limsup_{\substack{z \to z_0 \\ z \in [a_i, a_{i + 1}] \setminus \{ z_0 \}}} \Big| \frac{f(z) - f(z_0)}{z - z_0} \Big| = + \infty $$ 
$$ \text{for every} \; z_0 \in [a_i, a_{i + 1}] \} $$

for every $i = 0, 1, \cdots, n - 1$ and
$$ S_n(\Omega, [a_n, a_0]) = \{ f \in A(\Omega): \limsup_{\substack{z \to z_0 \\ z \in [a_n, a_0] \setminus \{ z_0 \}}} \Big| \frac{f(z) - f(z_0)}{z - z_0} \Big| = + \infty $$
$$ \text{for every} \; z_0 \in [a_n, a_0] \}. $$

Let also $H_i$ be the (open) half planes with boundary the lines containing the line segments $[a_i, a_{i + 1}]$ for every $i = 0, 1, \cdots, n - 1$, such that $H_i^{\circ} \cap \Omega \neq \emptyset$. In the same way, we can also define the set $H_n$. Thus
$$ \Omega = \bigcap_{i = 0}^{n} H_i^{\circ}. $$ 

It is easy to prove (in the same way as in Example \ref{third example}) that there exist functions $f_i \in S_i(\Omega, [a_i, a_{i + 1}])$ for every $i = 0, 1, \cdots, n - 1$. Thus, each $S_i(\Omega, [a_i, a_{i + 1}])$ is $G_{\delta}$ - dense in $A(\Omega)$ (if $i = n$, obviously we consider the class $S_n(\Omega, [a_n, a_0])$) and thus, the class 
$$ \Big( \bigcap_{i = 0}^{n - 1} S_i(\Omega, [a_i, a_{i + 1}]) \Big) \cap S_n(\Omega, [a_n, a_0]) \subseteq S(\Omega, \partial \Omega) $$ 

is $G_{\delta}$ - dense in $(A(\Omega)$. Therefore, $S(\Omega, \partial \Omega) \neq \emptyset$. It follows that the class $S(\Omega, \partial \Omega)$ is $G_{\delta}$ - dense in $(A(\Omega)$.

\end{proof}

\begin{example} \label{fifth example}

We consider a family of sets $X_i, i = 1, \cdots, N$ where each $X_i$ is either an open disc, or the interior of a half plane. We set
$$ \Omega = \bigcap_{i = 1}^{N} X_i $$

and we suppose that it holds $\Omega \neq \emptyset$. Let also $\emptyset \neq J \subseteq \partial \Omega$ be a compact set without isolated points. Then, the class of functions 
$$ S = S(\Omega, J) = \{ f \in A(\Omega): \limsup_{\substack{z \to z_0 \\ z \in J \setminus \{ z_0 \}}} \Big| \frac{f(z) - f(z_0)}{z - z_0} \Big| = + \infty \; \text{for every} \; z_0 \in J \} $$

is $G_{\delta}$ - dense in $A(\Omega) = A(\overline{\Omega})$.

\end{example}

\begin{proof} We notice that it holds 
$$ J = \bigcup_{k = 1}^{m} J_m $$

where each $J_m \subseteq \partial \Omega$ is either a line segment or an arc. We consider the following classes of functions
$$ S_k = S_k(\Omega, J_k) = \{ f \in A(\Omega): \limsup_{\substack{z \to z_0 \\ z \in J \setminus \{ z_0 \}}} \Big| \frac{f(z) - f(z_0)}{z - z_0} \Big| = + \infty \; \text{for every} \; z_0 \in J_k \}. $$

Obviously, it holds 
$$ \bigcap_{k = 1}^{m} S_k \subseteq S. $$

In the same way as in Example \ref{third example} we obtain that it holds $S_k(\Omega, J_k) \neq \emptyset$ for every $k = 1, \cdots, m$. Therefore, according to Sections 2 and 3, we obtain that each $S_k(\Omega, J_k)$ is $G_{\delta}$ - dense in $A(\Omega)$. Thus, from Baire's Theorem we obtain that the class 
$$ \bigcap_{k = 1}^{m} S_k $$ 

is also $G_{\delta}$ - dense in $A(\Omega)$ which in turn implies that $S(\Omega, J) \neq \emptyset$. It follows that $S(\Omega, J)$ is also $G_{\delta}$ - dense in $A(\Omega)$.

\end{proof}

\begin{example} \label{sixth example}
Let $\Omega$ be the open set that is the intersection of the interior of a square with side = 2 centered at 0, with the complement of the open unit disc (also centered at $0$). Then, the class of functions 
$$ S(\Omega, \partial \Omega) = \{ f \in A(\Omega): \limsup_{\substack{z \to z_0 \\ z \in \partial \Omega \setminus \{ z_0 \}}} \Big| \frac{f(z) - f(z_0)}{z - z_0} \Big| = + \infty \; \text{for every} \; z_0 \in \partial \Omega \} $$

is $G_{\delta}$ - dense in $A(\Omega)$.

\end{example}

\begin{proof} We notice that it holds $\partial \Omega = \mathbb{T} \cup [a_0, a_1] \cup [a_1, a_2] \cup [a_2, a_3] \cup [a_3, a_0]$, where, of course, the sets $[a_0, a_1], [a_1, a_2], [a_2, a_3]$ and $[a_3, a_0]$ are the four sides of the square. We consider the following classes of functions 
$$ S_i(\Omega, [a_i, a_{i + 1}]) = \{ f \in A(\Omega): \limsup_{\substack{z \to z_0 \\ z \in [a_i, a_{i + 1}] \setminus \{ z_0 \}}} \Big| \frac{f(z) - f(z_0)}{z - z_0} \Big| = + \infty $$
$$ \text{for every} \; z_0 \in [a_i, a_{i + 1}] \} $$

for every $i = 0, 1, 2$ and
$$ S_3(\Omega, [a_3, a_0]) = \{ f \in A(\Omega): \limsup_{\substack{z \to z_0 \\ z \in [a_3, a_0] \setminus \{ z_0 \}}} \Big| \frac{f(z) - f(z_0)}{z - z_0} \Big| = + \infty $$
$$ \text{for every} \; z_0 \in [a_3, a_0] \}. $$ 

Then, since the set $(\mathbb{C} \cup \{ \infty \}) \setminus \overline{\Omega}$ has a finite number of connected components, according to \cite{RUDIN} it holds $A(\Omega) = R(\overline{\Omega})$.

Using the results of Section 2 and in the same way as in Example \ref{third example} we obtain that the classes $S_0(\Omega, [a_0, a_1]), \cdots, S_3([a_3, a_0])$ are $G_{\delta}$ - dense in $A(\Omega)$.

Also, we consider the following class of functions 
$$ B(\Omega, \mathbb{T}) = \{ f \in A(\Omega): \limsup_{\substack{z \to z_0 \\ z \in \mathbb{T} \setminus \{ z_0 \}}} \Big| \frac{f(z) - f(z_0)}{z - z_0} \Big| = + \infty \; \text{for every} \; z_0 \in \mathbb{T} \} $$

and let $g$ be the function defined in Example \ref{first example}. We consider the following function $h: (\mathbb{C} \setminus \overline{D(0, 1)}) \to \mathbb{C}$ with $h(w) = g(\frac{1}{w})$ for every $w$. Then it holds $h \in A(\mathbb{C} \setminus \overline{D(0, 1)})$ and is also easy to verify that $(h \restriction_{\Omega}) \in B(\Omega, \mathbb{T})$. Thus, the class $B(\Omega, \mathbb{T})$ is $G_{\delta}$ - dense in $A(\Omega)$ and since $S_0([a_0, a_1]) \cap S_1([a_1, a_2]) \cap S_2([a_2, a_3]) \cap S_3([a_3, a_0]) \cap B(\Omega, \mathbb{T}) \subseteq S(\Omega, \partial \Omega)$ it follows that $S(\Omega, \partial \Omega) \neq \emptyset$ and thus, the class $S(\Omega, \partial \Omega)$ is $G_{\delta}$ - dense in $(A(\Omega)$.

\end{proof}

\begin{remark} Notice that in the previous example the set $\Omega$ is not a convex one.

\end{remark}

\begin{remark} In the previous example, by considering the function $h(w) = g(\frac{1}{w})$ we obtain that $S(\Omega, \mathbb{T}) \neq \emptyset$, where $\Omega = \{ z \in \mathbb{C}: |z| > 1 \}$. In addition, it is obvious that if $S(\Omega, J) \neq \emptyset$, where $G \subseteq \Omega$ is an open set and $J \subseteq \partial \Omega \cap \partial G$, then it holds $S(G, J) \neq \emptyset$.

\end{remark}

\begin{example} \label{seventh example}
 
Let $\emptyset \neq \Omega \subseteq \mathbb{C}$ be an open set and $\emptyset \neq J \subseteq \partial \Omega$ be a compact set without isolated points. Let also $V \subseteq \mathbb{C}$ be an open set with $J \subseteq V$. We suppose that there exists a function $\phi: \Omega \cup V \to \mathbb{C}$, which is 1 -1, holomorphic and can be extended in a continuous way to a homeomorphism from $\overline{\Omega} \cup \overline{V}$ to $\mathbb{C}$. We consider the following classes of functions
$$ S_1(\Omega, J) = \{ f \in A(\Omega): \limsup_{\substack{z \to z_0 \\ z \in J \setminus \{ z_0 \}}} \Big| \frac{f(z) - f(z_0)}{z - z_0} \Big| = + \infty \; \text{for every} \; z_0 \in J \} $$

and
$$ S_2(G, \widetilde{J}) = \{ f \in A(G): \limsup_{\substack{z \to z_0 \\ z \in \widetilde{J} \setminus \{ z_0 \}}} \Big| \frac{f(z) - f(z_0)}{z - z_0} \Big| = + \infty \; \text{for every} \; z_0 \in \tilde{J} \} $$

where $G = \phi(\Omega)$ and $\widetilde{J} = \phi(J)$. Then, if $S_1(\Omega, J) \neq \emptyset$ then it holds $S_2(\Omega, \widetilde{J}) \neq \emptyset$.

\end{example}

\begin{proof} It is well known that the set $G$ is an open one, $\phi'(z) \neq 0$ for every $z \in \Omega \cup V$, the function $\phi^{- 1}: \phi(\Omega \cup V) \to \Omega \cup V$ is a holomorphic function with $(\phi^{- 1})'(w) \neq 0$ for every $w \in \phi(\Omega \cup V)$ and $\phi^{- 1}$ can be extended continuously to a 1 - 1 function from $\overline{\phi(\Omega \cup V)}$ to $\mathbb{C}$. Now let $f \in S_1(\Omega, J)$. We consider the function $h = f \circ \phi^{- 1}: \phi(\Omega) \to \mathbb{C}$. Obviously, it holds $h \in A(G)$. In addition, $\widetilde{J}$ is a compact set without isolated points since $\phi$ is a homeomorphism. Let $\omega_0 = \phi(z_0) \in \widetilde{J}$, for a $z_0 \in J$. Then, there exists a sequence $\{ z_n \}_{n \geq 1} \in J \setminus \{ z_0 \}$ with $z_n \to z_0$ satisfying
$$ \lim_{n \to + \infty} \Big| \frac{f(z_n) - f(z_0)}{z_n - z_0} \Big| = + \infty. $$

We set $\omega_n = \phi(z_n) \in \widetilde{J} \setminus \{ w_0 \}$ for every $n \geq 1$. Then, it holds $\omega_n \to \omega_0$. In addition, we obtain
$$ \Big| \frac{h(\omega_n) - h(\omega_0)}{\omega_n - \omega_0} \Big| = \Big| \frac{f(z_n) - f(z_0)}{\phi(z_n) - \phi(z_0)} \Big| = \Big| \frac{f(z_n) - f(z_0)}{z_n - z_0} \Big| \cdot \Big| \frac{z_n - z_0}{\phi(z_n) - \phi(z_0)} \Big|. $$

We have
$$ \lim_{n \to + \infty} \Big| \frac{z_n - z_0}{\phi(z_n) - \phi(z_0)} \Big| = \frac{1}{|\phi'(z_0)|} $$

with $\phi'(z_0) \neq 0$, since $z_0 \in \Omega \cup V$. Thus, 
$$ \lim_{n \to + \infty} \Big| \frac{h(\omega_n) - h(\omega_0)}{\omega_n - \omega_0} \Big| = + \infty $$ 

which it turn implies that 
$$ \limsup_{\substack{\omega \to \omega_0 \\ \omega \in \widetilde{J} \setminus \{ \omega_0 \}}} \Big| \frac{h(\omega) - h(\omega_0)}{\omega - \omega_0} \Big| = + \infty $$ 

for every $\omega_0 \in \widetilde{J}$. Thus, $h \in S_2(G, \widetilde{J})$ and that completes the proof.

\end{proof}

\begin{remark} If the class $S_1(\Omega, J)$ is $G_{\delta}$ - dense, then the class $S_2(G, \widetilde{J})$ is also $G_{\delta}$ - dense, since the function $\psi: A(\Omega) \to A(\phi(\Omega))$ with $\psi(f) = f \circ \phi^{- 1}$ for every $f \in A(\Omega)$ is a homeomorphism, where the spaces $A(\Omega)$ and $A(\phi(\Omega))$ are endowed with their natural topologies.                                   Notice that $\psi(S_1(\Omega, J)) = S_2(G, \widetilde{J})$.

Especially, if $D$ is the open unit disc, $J = \mathbb{T}$, $V = D(0, r)$ with $r > 1$ and we consider the function $\phi: D \cup V \equiv V \to \mathbb{C}$ which is 1 - 1 and holomorphic, then the respective class $S_2(G, \widetilde{J})$ in $A(\phi(D))$ with $\widetilde{J} = \phi(\partial D)$ is $G_{\delta}$ - dense in $A(\phi(D))$ with $\widetilde{J}$ being not - necessarily a line segment, an arc or the union of the previous two.

\end{remark}

\begin{example} \label{eighth example}

We consider the open set $\Omega = D \setminus [0, \frac{1}{2}]$. Then the class of functions 
$$ S(\Omega, \partial \Omega) = \{ f \in A(\Omega): \limsup_{\substack{\omega \to \omega_0 \\ \omega \in \partial \Omega \setminus \{ \omega_0 \}}} \Big| \frac{f(z) - f(z_0)}{z - z_0} \Big| = + \infty \; \text{for every} \; z_0 \in \partial \Omega \} $$

is void.

\end{example}

\begin{proof} Let $f \in A(\Omega)$. Then the function $f$ is continuous on $\overline{D}$ and holomorphic in $D \setminus \mathbb{R}$. From a known corollary of Morera's Theorem, it follows that $f$ is also holomorphic in $D$. Thus, 
$$ \limsup_{\substack{z \to z_0 \\ z \in [0, \frac{1}{2}] \setminus \{ z_0 \}}} \Big| \frac{f(z) - f(z_0)}{z - z_0} \Big| = |f'(z_0)| < + \infty $$ 

for every $z_0 \in [0, \frac{1}{2}]$, therefore $f \not\in S(\Omega, \partial \Omega)$. It follows that $S(\Omega, \partial \Omega) = \emptyset$.

\end{proof}

\begin{remark} From the previous example we deduce that the class $S(\Omega, \partial \Omega)$ is not always $G_{\delta}$ - dense, even if the set $\Omega$ is a bounded set.

\end{remark}

\begin{example} \label{ninth example}

We consider the open set $\Omega = \{ z \in \mathbb{C}: Re(z) > 0 \} \setminus [1, + \infty]$ and the respective class $S(\Omega, J)$, where $J = [1, + \infty)$. Then it holds $S(\Omega, J) = \emptyset$.

\end{example}

\begin{proof} Let $f \in A(\Omega)$. In the same way as in the previous example, the function $f$ can be extended holomorphically on $H = \{ z: Re(z) > 0 \}$. Thus 
$$ \limsup_{\substack{z \to z_0 \\ z \in J \setminus \{ z_0 \}}} \Big| \frac{f(z) - f(z_0)}{z - z_0} \Big| = |f'(z_0)| < + \infty $$

for every $z_0 \in J$. Therefore, it holds $f \not\in S(\Omega, J)$ and thus $S(\Omega, J) = \emptyset$.

\end{proof}

\begin{remark} The previous example shows that the class $S(\Omega, J)$ can be also void even if the set $\Omega$ is an unbounded one and the set $J$ is not necessarily a compact one.

\end{remark}

\begin{remark} So far, we do not have an example where it holds $S(\Omega, J) = \emptyset$ and $J \subseteq \partial (\overline{\Omega})$.

\end{remark}

\begin{example} \label{tenth example}

So far, we have seen that if $\phi: D(0, r) \to \mathbb{C}$ is a holomorphic and 1 - 1 function and  $r > 1$, then for $\Omega = \phi(D(0, 1))$ and $J$ any compact subset of $\partial \Omega$ it holds $S(\Omega, J) \neq \emptyset$ (and therefore, the class $S(\Omega, J)$ is $G_{\delta}$ - dense in $A(\Omega) = R(\Omega)$). Under the previous assumptions it holds $\phi'(e^{i \theta}) \neq 0$ for every $\theta \in \mathbb{R}$. We note that the same result holds if $\Omega$ is a Jordan domain and $\phi: D(0, 1) \to \Omega$ is holomorphic, 1 - 1 and onto, $\phi'$ extends continuously on $\overline{D(0, 1)}$ and $\phi'(e^{i \theta}) \neq 0$ for every $\theta \in \mathbb{R}$. We remind that $\phi$ extends to a homeomorphism from $\overline{D(0, 1)}$ to $\overline{\Omega}$, according to Osgood - Caratheodory Theorem \cite{KOOSIS}. The proof that $S(\Omega, J) \neq \emptyset$ is similar to the proof of Example \ref{seventh example} and therefore is omitted. 

\end{example}

\noindent
\textbf{Acknowledgment.} The authors would like to thank Professor Vassili Nestoridis for suggesting the specific topic and also for his valuable guidance during the creation of this paper. The second author acknowledges financial support from Program 70/3/13297 from ELKE, University of Athens, Greece.

\bigskip

\noindent
K. Kavvadias

\medskip

\noindent
National and Kapodistrian University of Athens \\
Department of Mathematics\\
Panepistemiopolis \\
157 84 Athens\\
Greece\\
e-mail: kavvadiaskostantinos@hotmail.com

\bigskip
\bigskip
\bigskip

\noindent
K. Makridis

\medskip

\noindent
National and Kapodistrian University of Athens \\
Department of Mathematics\\
Panepistemiopolis \\
157 84 Athens\\
Greece\\
e-mail: kmak167@gmail.com


\begin{thebibliography}{99}

\bibitem{ESKENAZIS}
Eskenazis A. \textit{Topological genericity of nowhere differentiable functions in the disc algebra (Arch. Math. (Basel) 103 (2014) No. 1, 85 - 92)}.

\bibitem{ESKENAZIS.MAKRIDIS}
Eskenazis A. and Makridis K. \textit{Topological genericity of nowhere differentiable functions in the disc and polydisc algebras (J.M.A.A. 420 (2014), No. 1, 435 - 446)}.

\bibitem{GAMELIN}
Gamelin T. \textit{Uniform Algebras (AMS vol 311, Chelsea Pub. 1984)}.

\bibitem{KOOSIS}
Koosis P. \textit{An introduction to $H_p$ spaces (Cambridge University Press, 1998)}.

\bibitem{MASTRANTONIS.PANAGIOTIS}
Mastrantonis V. and Panagiotis C. \textit{Nowhere differentiable functions of analytic type on products of finitely connected planar domains (arXiv 1608.08235v1)}.

\bibitem{RUDIN}
Rudin W. \textit{Real and complex analysis (3rd edition, McGraw - Hill Inc. 1967)}.

\bibitem{VITUSHKIN}
Vitushkin A. G. \textit{Analytic capacity of sets in problems of approximation theory (Russian Math. Surveys 22 (1967) 139 - 200)}.

\end{thebibliography}
\end{document}